\documentclass[a4paper,12pt]{article}

\usepackage{cmap}

\usepackage{amsmath,amsthm,amssymb}
\usepackage[T2A]{fontenc}
\usepackage[english]{babel}
\usepackage[cp1251]{inputenc}
\usepackage{amscd}

\usepackage{hyperref}

\frenchspacing \unitlength=1mm

\tolerance = 3000

\oddsidemargin=0.3cm \textwidth=15.5cm \topmargin=-1.7cm
\textheight=690pt


  \newskip\prethm \prethm3.0pt plus1.3pt minus.4pt
  \newskip\posthm \posthm2.7pt plus1.4pt minus.3pt
  \newtheoremstyle{STATEMENT}%
       {\prethm}{\posthm}{\itshape}{\parindent}{\scshape}
       {.}{.6em plus.2em minus.1em}{}
  \newtheoremstyle{EXPLANATION}%
       {\prethm}{\posthm}{}{\parindent}{\scshape}
       {.}{.6em plus.2em minus.1em}{}
\theoremstyle{STATEMENT}

\newtheorem {theorem}{Theorem}

\newtheorem    {assertion} {Assertion}

\newtheorem    {assumption} {Assumption}

\theoremstyle{EXPLANATION}

\newtheorem   {remark}{Remark}
\newtheorem    {example} {Example}

\begin{document}

\title{Integral affine 3-manifolds.}
\author{Kozlov I.K. \\ MSU, Moscow, Russia. \\ ikozlov90@gmail.com}
\date{}

\maketitle

\begin{abstract}  Affine manifolds are called integral if there is an atlas such that all transition maps are affine transformations with integer matrices of linear parts. In this paper we describe all complete integral affine structures on compact three-dimensional manifolds up to a finite-sheeted covering. Also a complete list of integral affine structures on the three-dimensional torus and compact three-dimensional nilmanifolds was obtained. \end{abstract} 

\begin{center}
\textit{Key words:} affine manifolds, three-dimensional manifolds, nilmanifolds, solvmanifolds. 
\end{center}

\section{Introduction}

An \textit{affine manifold} is a smooth manifold with an atlas  $\left\{ (U_\alpha,\varphi_\alpha) \right\}$ such that all maps $\varphi_\beta^{-1}\varphi_\alpha$ are affine transformations $\vec{x} \mapsto A \vec{x}+\vec{b}$. An affine manifold is called \textit{integral} if the matrices $A$ of all  transition maps are integer, i.e. $A \in \mathrm{GL}_n(\mathbb Z)$.  

Integral affine manifolds naturally arise in symplectic geometry and the theory of integrable Hamiltonian systems. In particular, a base of any Lagrangian bundle with compact connected fibers has a natural integral affine structure (see, for example, \cite{Dui80}, \cite{KozlovIK10}). In action-angle coordinates from the Liouville theorem, this affine structure on the base is defined by the action coordinates.

Earlier, in paper  \cite{KozlovIK10} classifying invariants of Lagrangian bundles were described (this work was based on  \cite{Dui80} and \cite{Mishachev96}). The first invariant is an integral affine structure on the base. The remaining invariants are elements of some cohomology groups, and in \cite{KozlovIK10} it was shown how they can be calculated if one knows the affine structure on the base. Thus, in order to study Lagrangian bundles on (compact) six-dimensional symplectic spaces $\pi: (M^{6}, \omega) \rightarrow B^3$ or integrable systems with $3$ degrees of freedom, a list of possible integral affine structures on (compact) three-dimensional manifolds may be useful. A similar list is described in this paper (see Theorems~\ref{T:IntAff3NilMan} and \ref{T:IntAffSolv3d}).

Previously, two-dimensional Lagrangian bundles over orientable two-dimensional surfaces were classified by K.\,N.~Mishachev in  \cite{Mishachev96}. Moreover, he proved that any two-dimensional integral affine manifold is diffeomorphic to either the torus  $\mathbb{T}^2$ or the Klein bottle $K^2$ and classified all integral affine structures on the two-dimensional torus $\mathbb{T}^2$. Similar results for non-orientable surfaces (i.e., in the case of the Klein bottle $K^2$) were independently (and almost simultaneously) obtained by I.\,K.~Kozlov  \cite{KozlovIK10}  and D.~ Sepe \cite{Sepe}. In this paper, we will continue these studies in the three-dimensional case.

It is well known that compactness of an affine manifold does not imply its (geodesic) completeness. So far, the Markus conjecture, which states that completeness of an affine manifold is equivalent to existence of a parallel volume form (which in turn is equivalent to that for some atlas all the matrices of transition maps $A \in \operatorname{SL}_n(\mathbb{R})$), has not been proven. For simplicity, we will consider only those affine manifolds for which the Markus conjecture holds. We also assume that all manifolds are orientable so that all matrices of transition maps  $A \in \operatorname{SL}_n(\mathbb{Z})$.

\begin{assumption} All integral affine manifolds considered in this paper are assumed to be complete and orientable.\end{assumption}

Any \textit{complete affine manifold} is a quotient $E/\Gamma$, where  $\Gamma \subset \operatorname{Aff}(E)$ is a discrete group of affine transformations that acts freely and properly discontinuously on an affine space $E$. To be short we will often identify a complete affine manifold $M\approx E/ \Gamma$ with the corresponding group  $\Gamma\subset \operatorname{Aff}(E)$.  

All three-dimensional affine crystallographic groups were classified in \cite{Aff3Cryst}. In particular, all compact complete three-dimensional affine manifolds were described there.

\begin{theorem}[\cite{Aff3Cryst}] Let  $M^3$  be a compact three-dimensional manifold. Then the following conditions are equivalent:

\begin{enumerate}

\item $M$ admits a complete affine structure,

\item $M$ is finitely covered by a $\mathbb{T}^2$-bundle over $S^1$;

\item $\pi_1(M)$  is solvable and $M$ is aspherical.

\end{enumerate}

\end{theorem}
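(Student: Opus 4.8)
The plan is to establish the three equivalences through the cycle $(1)\Rightarrow(3)$, $(3)\Leftrightarrow(2)$, and $(3)\Rightarrow(1)$, which splits the problem into one genuinely affine-geometric input and one block of standard three-dimensional topology. The starting observation I would use throughout is that completeness turns the whole question into group theory: by the description recalled above, a complete affine structure presents $M$ as $\mathbb{R}^3/\Gamma$ with $\Gamma\cong\pi_1(M)$ acting freely, properly discontinuously and cocompactly by affine transformations. Since the universal cover $\mathbb{R}^3$ is contractible, $M$ is automatically a $K(\Gamma,1)$, so the asphericity half of $(3)$ comes for free and everything reduces to understanding the discrete affine group $\Gamma$.

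For $(1)\Rightarrow(3)$ the only remaining point is solvability of $\Gamma$, and this is where I expect the real difficulty to lie. The statement needed is exactly the Auslander conjecture in dimension three: a discrete subgroup of $\mathrm{Aff}(\mathbb{R}^3)$ acting properly discontinuously and cocompactly is virtually polycyclic. In low dimensions this is a theorem — precisely the content of \cite{Aff3Cryst} — proved by analysing the Zariski closure of $\Gamma$, its Levi decomposition, a Tits-alternative/amenability argument ruling out free subgroups, and the structure of the syndetic hull of $\Gamma$. I would take this as the main black box; upgrading ``virtually polycyclic'' to the solvable $\pi_1$ demanded in $(3)$ is then routine in the torsion-free three-manifold setting. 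This Auslander-type result is the \emph{main obstacle} of the whole theorem; the rest is comparatively formal.

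The equivalence $(3)\Leftrightarrow(2)$ I would treat as pure three-manifold topology, independent of any affine structure. In one direction a $\mathbb{T}^2$-bundle over $S^1$ has fundamental group an extension $1\to\mathbb{Z}^2\to\pi_1\to\mathbb{Z}\to1$, which is polycyclic, and its total space is aspherical because it fibers with aspherical fiber and aspherical base; both properties are inherited by finite covers (and the virtual version of solvability passes back down). Conversely, a compact aspherical three-manifold with solvable fundamental group has amenable $\pi_1$, so it can carry neither the hyperbolic nor the Seifert geometries whose groups contain free subgroups; by geometrization it must be modeled on one of the three solvable geometries $\mathbb{E}^3$, $\mathrm{Nil}$ or $\mathrm{Sol}$ (Thurston; classically via Evans--Moser), and each manifold carrying one of these is finitely covered by a $\mathbb{T}^2$-bundle over $S^1$, with trivial, unipotent, and hyperbolic monodromy respectively.

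Finally, to close the cycle I would prove $(3)\Rightarrow(1)$ by an explicit construction, again geometry by geometry. Using $(3)\Leftrightarrow(2)$, I reduce to realizing $\Gamma$ as a lattice in the corresponding simply connected solvable Lie group $G$ (abelian $\mathbb{R}^3$, the Heisenberg group, or the $\mathrm{Sol}$ group). For each of these I would exhibit a complete left-invariant affine structure — equivalently a flat torsion-free left-invariant connection — so that $G$ acts simply transitively and affinely on $\mathbb{R}^3$; then $M=G/\Gamma=\mathbb{R}^3/\Gamma$ inherits a complete affine structure. The one point worth flagging is that such left-invariant structures need not exist for general solvable groups (Benoist's higher-dimensional counterexamples to Milnor's question), but in dimension three all three groups admit them by direct computation, so the construction goes through and the three conditions are equivalent.
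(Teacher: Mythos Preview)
The paper does not prove this theorem at all: it is quoted verbatim from \cite{Aff3Cryst} (Fried--Goldman) and used as background input for the paper's own results, so there is no in-paper proof to compare your proposal against. Your outline is in fact a faithful high-level summary of the strategy of \cite{Aff3Cryst} itself: the implication $(1)\Rightarrow(3)$ via the three-dimensional Auslander theorem is exactly the main theorem of that paper, and the topological equivalence $(2)\Leftrightarrow(3)$ together with the explicit affine realisations of the $\mathbb{E}^3$, $\mathrm{Nil}$ and $\mathrm{Sol}$ model geometries for $(3)\Rightarrow(1)$ are also carried out there.

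Two small comments on your write-up. First, invoking geometrization for $(3)\Rightarrow(2)$ is anachronistic relative to \cite{Aff3Cryst} and unnecessary: the classical Evans--Moser classification of compact aspherical $3$-manifolds with solvable fundamental group (which you mention parenthetically) already gives the result without Perelman. Second, the step ``virtually polycyclic $\Rightarrow$ solvable'' is not automatic for arbitrary torsion-free groups; in dimension three it goes through because the finite quotient one has to control is (up to the relevant cases) a crystallographic point group, hence solvable by the crystallographic restriction, but you should say this rather than declare it routine.
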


A classification of integral affine structures is ``finer'' than the classification of affine crystallographic groups in \cite{Aff3Cryst}. From the algebraic point of view,  \cite{Aff3Cryst} studies conjugacy classes of discrete subgroups $\Gamma \subset \operatorname{Aff}_n(\mathbb{R})$. In this paper we study conjugacy classes of subgroups $\Gamma$ in the smaller group $\operatorname{GL}_n(\mathbb{Z}) \ltimes \mathbb{R}^n$ of affine transformations $\vec{x} \mapsto   A\vec{x} +\vec{b}$ with an integer matrix $A$.

In this paper, we describe all complete integral affine structures on compact three-dimensional manifolds up to a finite-sheeted covering (see Theorem~\ref{T:IntAffSolv3d}). We consider structures up to a covering so that not to get a too big final list. Also in Section  \ref{S:3dNilAffine} we describe all integral affine structures on the three-dimensional torus and compact three-dimensional nilmanifolds (see Theorem  \ref{T:IntAff3NilMan}). Note that Theorem~\ref{T:IntAff3NilMan} is proved by elementary methods without using results from \cite{Aff3Cryst}.

In this paper we use the following notation and definitions. An affine transformation $\vec{x} \mapsto   A\vec{x} +\vec{b}$ is denoted by $(A, \vec{b})$. The linear and vector parts of an affine transformation $g =(A, \vec{b})$ will be denoted by $L(g) =A$ and $v(g) = \vec{b}$ respectively. The group of affine transformations $(A, \vec{b})$ of a real $n$-dimensional space with an integral matrix $A$ is denoted as $\operatorname{AGL}_n( \mathbb Z)$. An affine frame $Oe_1\dots e_n$ of an affine space  $\mathbb{A}^n$ consists of a point $O \in \mathbb{A}^n$ and vectors $e_1 \dots e_n$ that form a basis of $\mathbb{R}^n$. An affine frame will be called \textit{integer} if all vectors $e_i \in \mathbb{Z}^n$. A diffeomorphism of integral affine manifolds  $f: M_1 \to M_2$ that identifies integral affine structures on them will be call an \textit{affine diffeomorphism}.

\paragraph{Acknowledgments.} The author would like to thank the staff of the Department of Differential Geometry and Applications of the MSU Faculty of Mechanics and Mathematics, especially Prof. A.\,A.~Oshemkov for his support in writing this paper. This work was supported by the Russian Science Foundation grant (project № 17-11-01303).

\section{Integral affine 3-nilmanifolds} \label{S:3dNilAffine}

\begin{theorem} \label{T:IntAff3NilMan} Any compact integral affine $3$-dimensional nilmanifold  $M^3$ is complete. Moreover, it is affinely diffeomorphic to a quotient of  $\mathbb{R}^3$ by an action of one of the following groups $\Gamma$ described in Tables  \ref{Tab:IntAff3Torus} and \ref{Tab:IntAff3NilMan}. These tables contain matrices  $A, \vec{b}$ of affine transformations  $x \mapsto  Ax+\vec{b}$ corresponding to generators $g, h, t$ of the group $\Gamma$.

In Series $1$--$4$ ( described in Table \ref{Tab:IntAff3Torus}) the manifold is diffeomorphic to the torus $M^3 \approx \mathbb{T}^3$ and the generators $g, h, t$ commute. In other series (described in Table\ref{Tab:IntAff3NilMan})  they satisfy the relation \begin{equation} \label{Eq:NilManFundGroupRel} [g, t] = [h, t] =e, \qquad [g, h] = t^m, \end{equation}  where $m  \in \mathbb{N}$. In all series $a_1, b_1, b_2,  c_1 \in \mathbb{Z}\backslash\left\{ 0\right\},  b_1^* \in \mathbb{Z}, n \in \mathbb{N}, x_i, y_i, z_i \in \mathbb{R}$. Also, in all series except for $8$ the vector parts $v(g), v(h), v(t)$ must be linearly independent. In Series $8$ the following condition must be satisfied \begin{equation}  \label{Eq:NilManSpecialCond} \left| \begin{matrix} x_3 &  y_1 x_3  - y_3 x_1 + \frac{m}{2n} x_3^2 \\ -n y_3 &  y_2 x_3  - y_3 x_2 + \frac{m n }{2}y_3^2  \end{matrix} \right| \not =0  \end{equation} 

\begin{table}[h!]
\[
\begin{array}{|c|c|c|c|}
\hline
 & g & h & t \\
\hline
1 & E ,\left(\begin{matrix}x_1
\\ y_1 \\ z_1\end{matrix} \right)& E ,\left(\begin{matrix}x_2
\\ y_2 \\ z_2 \end{matrix} \right) & E ,\left(\begin{matrix}x_3
\\ y_3 \\ z_3\end{matrix} \right) \\
\hline
2 & \left(\begin{matrix}1& 0 & b_1
\\ 0&1 & 0 \\ 0 & 0 & 1\end{matrix} \right),\left(\begin{matrix}0
\\ y_1 \\ z_1\end{matrix} \right) & E ,\left(\begin{matrix}x_2
\\ y_2 \\ 0 \end{matrix} \right) & E ,\left(\begin{matrix}x_3
\\ y_3 \\ 0\end{matrix} \right) \\ \hline
3 & \left(\begin{matrix}1& a_1 & 0
\\ 0&1 & 0 \\ 0 & 0 & 1\end{matrix} \right),\left(\begin{matrix}0
\\ y_1 \\ z_1\end{matrix} \right) & \left(\begin{matrix}1& 0 &
n a_1 \\ 0&1 & 0 \\ 0 & 0 & 1\end{matrix} \right),\left(\begin{matrix} x_2
\\ n z_1 \\ z_2 \end{matrix} \right) & E, \left(\begin{matrix}x_3
\\ 0 \\ 0\end{matrix} \right) \\ \hline 
4 & \left(\begin{matrix}1& a_1 & b_1^*
\\ 0&1 & c_1 \\ 0 & 0 & 1\end{matrix} \right),\left(\begin{matrix}0
\\ 0 \\ z_1\end{matrix} \right) & \left(\begin{matrix}1& 0 &
b_2 \\ 0&1 & 0 \\ 0 & 0 & 1\end{matrix} \right),\left(\begin{matrix}x_2
\\ \frac{b_2}{a_1} z_1 \\ 0\end{matrix} \right) & E, \left(\begin{matrix}x_3
\\ 0 \\ 0\end{matrix} \right) \\ \hline 
\end{array} 
\]
\caption{Integral affine 3-tori} 
\label{Tab:IntAff3Torus}
\end{table}

\begin{table}[h!]
\[
\begin{array}{|c|c|c|c|}
\hline
 & g & h & t \\
\hline
5& \left(\begin{matrix}1& a_1 & 0
\\ 0&1 & 0 \\ 0 & 0 & 1\end{matrix} \right),\left(\begin{matrix}0
\\ y_1 \\ z_1\end{matrix} \right) & E ,\left(\begin{matrix}x_2
\\ \frac{m x_3}{a} \\ 0 \end{matrix} \right) & E ,\left(\begin{matrix}x_3
\\ 0 \\ 0\end{matrix} \right) \\ \hline
6& \left(\begin{matrix}1& a_1 & b_1^*
\\ 0&1 & c_1 \\ 0 & 0 & 1\end{matrix} \right),\left(\begin{matrix}0
\\ 0 \\ z_1\end{matrix} \right) & E ,\left(\begin{matrix}x_2
\\ \frac{m x_3}{a} \\ 0 \end{matrix} \right) & E ,\left(\begin{matrix}x_3
\\ 0 \\ 0\end{matrix} \right) \\ \hline
7 & \left(\begin{matrix}1& a_1 & 0
\\ 0&1 & 0 \\ 0 & 0 & 1\end{matrix} \right),\left(\begin{matrix}0
\\ y_1 \\ z_1\end{matrix} \right) & \left(\begin{matrix}1& 0 &
n a_1 \\ 0&1 & 0 \\ 0 & 0 & 1\end{matrix} \right),\left(\begin{matrix} 0
\\ \frac{mx_3}{a_1} + n z_1 \\ z_2 \end{matrix} \right) & E, \left(\begin{matrix}x_3
\\ 0 \\ 0\end{matrix} \right) \\ \hline 
8 & \left(\begin{matrix}1& 0 & 0
\\ 0&1 & c_1 \\ 0 & 0 & 1\end{matrix} \right),\left(\begin{matrix}x_1
\\ y_1 \\ -\frac{mx_3}{nc_1} \end{matrix} \right) & \left(\begin{matrix}1& 0 &
n c_1 \\ 0&1 & 0 \\ 0 & 0 & 1\end{matrix} \right),\left(\begin{matrix} x_2
\\ y_2 \\ \frac{my_3}{c_1} \end{matrix} \right) & E, \left(\begin{matrix}x_3
\\ y_3 \\ 0\end{matrix} \right) \\ \hline 
9 & \left(\begin{matrix}1& a_1 & b_1^*
\\ 0&1 & c_1 \\ 0 & 0 & 1\end{matrix} \right),\left(\begin{matrix}0
\\ 0 \\ z_1\end{matrix} \right) & \left(\begin{matrix}1& 0 &
b_2 \\ 0&1 & 0 \\ 0 & 0 & 1\end{matrix} \right),\left(\begin{matrix}x_2
\\ \frac{mx_3 + b_2 z_1}{a_1} \\ 0\end{matrix} \right) & E, \left(\begin{matrix}x_3
\\ 0 \\ 0\end{matrix} \right) \\ \hline 
\end{array} 
\] \caption{Integral affine 3-nilmanifolds} 
\label{Tab:IntAff3NilMan}
\end{table}

\end{theorem}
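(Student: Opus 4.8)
The plan is to read off the structure from the holonomy of the integral affine structure. Since $M^3$ is a compact nilmanifold, $\Gamma=\pi_1(M)$ is a finitely generated torsion-free nilpotent group: either $\mathbb{Z}^3$, in which case $M\approx\mathbb{T}^3$, or the integer Heisenberg group generated by $g,h,t$ subject to \eqref{Eq:NilManFundGroupRel}. The affine structure provides a developing map $D\colon\widetilde M\to\mathbb{R}^3$ and a holonomy homomorphism $\rho\colon\Gamma\to\operatorname{AGL}_3(\mathbb{Z})$, and by orientability the linear parts $L(\rho(\gamma))$ lie in $\operatorname{SL}_3(\mathbb{Z})$. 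Writing $\rho(g)=(A_g,\vec b_g)$ and likewise for $h,t$, the theorem reduces to two things: proving completeness, so that $M\approx\mathbb{R}^3/\rho(\Gamma)$, and then conjugating the triple $(\rho(g),\rho(h),\rho(t))$ by an element of $\operatorname{AGL}_3(\mathbb{Z})$ — that is, choosing a suitable integer affine frame — into one of the normal forms listed in Tables~\ref{Tab:IntAff3Torus} and \ref{Tab:IntAff3NilMan}.

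The crucial first step is to show that the linear parts $A_g,A_h,A_t$ are unipotent. Here I would combine two observations. First, freeness of the action forces $1$ to be an eigenvalue of every $A_\gamma$: if $A_\gamma-E$ were invertible, then $(A_\gamma,\vec b_\gamma)$ would fix the point $(E-A_\gamma)^{-1}\vec b_\gamma$, contradicting that $\rho(\gamma)$ acts without fixed points. Second, $L(\rho(\Gamma))$ is a finitely generated nilpotent subgroup of $\operatorname{SL}_3(\mathbb{Z})$, hence simultaneously triangularizable; its semisimple (diagonal) parts generate an abelian group of commuting matrices whose eigenvalues are the diagonal characters of $\Gamma$. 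An eigenvalue of absolute value $\neq 1$ would produce, through the associated hyperbolic $\operatorname{SL}_2(\mathbb{Z})$-dynamics, a non-nilpotent subgroup inside $\Gamma$ (a semidirect product of exponential growth), which is impossible. Thus all eigenvalues equal $1$ and the $A_\bullet$ are unipotent. A compact integral affine manifold with unipotent linear holonomy is complete — the developing map is a diffeomorphism onto $\mathbb{R}^3$ — so indeed $M\approx\mathbb{R}^3/\rho(\Gamma)$ with $\rho$ injective and $\rho(\Gamma)$ a lattice.

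Next I would normalize the unipotent linear parts by conjugation in $\operatorname{SL}_3(\mathbb{Z})$. Using the relation $A_t^m=[A_g,A_h]$ together with the centrality of $t$, the pair $(A_g,A_h)$ can be reduced to a short list of integer upper-unitriangular shapes — roughly, the number and position of the admissible off-diagonal entries $a_1,b_1,b_1^{*},b_2,c_1,na_1,nc_1$ — and each such shape corresponds to one of the Series $1$–$9$. The torus case (all $A_\bullet$ commuting) accounts for Series $1$–$4$ of Table~\ref{Tab:IntAff3Torus}, while the genuinely nilpotent case ($[g,h]=t^m$) accounts for Series $5$–$9$ of Table~\ref{Tab:IntAff3NilMan}.

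The last and most laborious step is to determine the vector parts. With the linear parts fixed, the group relations (commutativity in the torus case, or \eqref{Eq:NilManFundGroupRel} in the nilpotent case) become inhomogeneous linear equations in $\vec b_g,\vec b_h,\vec b_t$; solving these and then absorbing the remaining freedom by a further integer affine change of frame produces exactly the displayed vector parts with their real parameters $x_i,y_i,z_i$ and integer parameters $a_1,b_1,b_2,c_1,b_1^{*},n$. Finally one must impose that $\rho(\Gamma)$ act freely and properly discontinuously with quotient $M$: this is where the nondegeneracy hypotheses enter, namely linear independence of $v(g),v(h),v(t)$ in all series but $8$, and the determinant condition \eqref{Eq:NilManSpecialCond} in Series~$8$, which together guarantee that $\rho(\Gamma)$ is a genuine lattice. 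I expect the main obstacle to be the bookkeeping of this case analysis — verifying that the list is exhaustive and that distinct series are inequivalent, i.e. that every admissible triple is affinely conjugate into exactly one series and that the stated parameter conditions are both necessary and sufficient.
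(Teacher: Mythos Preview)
Your overall architecture (unipotent linear holonomy $\Rightarrow$ completeness $\Rightarrow$ normalize generators) matches the paper's, but two points deserve attention.

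\textbf{The unipotence/completeness step.} Your argument that $1$ is an eigenvalue of every $A_\gamma$ uses that $\rho(\gamma)$ has no fixed point in $\mathbb{R}^3$; but before completeness is established the holonomy is only known to act freely on the image of the developing map, so the fixed point $(E-A_\gamma)^{-1}\vec b_\gamma$ could lie outside that image. The second half of the argument is also not right as stated: a single hyperbolic element of $\operatorname{SL}_3(\mathbb{Z})$ generates a cyclic (hence nilpotent) group of exponential word growth in $\operatorname{GL}$, so ``$|\lambda|\neq 1$ forces a non-nilpotent subgroup of $\Gamma$'' does not follow. The paper sidesteps both issues by invoking the Fried--Goldman--Hirsch theorem: for a compact affine manifold with nilpotent affine holonomy, the existence of a parallel volume form (automatic here since $L(\rho(\Gamma))\subset\operatorname{SL}_3(\mathbb{Z})$) is equivalent to completeness and to unipotence of the linear holonomy. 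That single citation delivers both conclusions at once.

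\textbf{The case analysis.} The paper does \emph{not} organize the classification by the shapes of $(A_g,A_h)$ for fixed abstract Heisenberg generators. Instead it stratifies by $\operatorname{rk} T$, where $T\subset\rho(\Gamma)$ is the subgroup of pure translations, treating $\operatorname{rk} T=3,2,1,0$ in turn and proving separately that $\operatorname{rk} T=0$ is impossible. This is why, in every row of both tables, $t$ is a genuine translation: it is chosen from $T$, not handed to you by the abstract presentation. In your scheme, centrality of $t$ only forces $A_t$ to have the form $\left(\begin{smallmatrix}1&0&\beta\\0&1&0\\0&0&1\end{smallmatrix}\right)$, and you would still need an argument that one can choose a central generator with $\beta=0$; that argument is essentially the paper's ``$\operatorname{rk} T\geq 1$'' assertion, which is a nontrivial computation. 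Your plan is not wrong, but without this extra step the reduction to the nine listed series does not go through.
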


\begin{remark} From  Theorem~\ref{T:IntAff3NilMan} one can obtain a classification of  $3$-dimensional integral affine nilmanifolds by a scheme similar to the one described in   \cite{KozlovIK10} and \cite{Mishachev96} in the two-dimensional case. Manifolds from different series are affinely non-diffeomorphic (since they have different linear holonomy).   In one series, one has to to identify the manifolds $M^3$ that are obtained from each other by a change of integer affine frame of $\mathbb R^3$  or by a change of generators of the fundamental group $\pi_1(M^3)$. For example, two  manifolds $M$ and $M'$ from Series $1$ are affinely diffeomorphic if and only if
$C \left(\begin{smallmatrix}x_1 & x_2 & x_3 \\ y_1 & y_2 & y_3 \\ z_1 & z_2 & z_3\end{smallmatrix} \right) D = \left(\begin{smallmatrix}x'_1 & x'_2 & x'_3 \\ y'_1 & y'_2 & y'_3 \\ z'_1 & z'_2 & z'_3\end{smallmatrix} \right)$ for some matrices $C, D \in \operatorname{GL}(3,\mathbb{Z})$. \end{remark}

\begin{proof}[Proof of theorem~\ref{T:IntAff3NilMan}]  Completeness of integral affine nilmanifolds follows from the following previously proved statement.

\begin{theorem}[\cite{AffNilHol}] For a compact affine manifold $ M $ with nilpotent affine holonomy, the following conditions are equivalent: \begin{enumerate} 

\item $M$ is complete;

\item $M$ admits a parallel volume form;

\item the linear holonomy is unipotent;

\item $M$ is a complete affine nilmanifold.

\end{enumerate}  \end{theorem}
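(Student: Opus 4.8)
\emph{Setup and the easy links.} Throughout I would work with the developing map $D\colon \widetilde M\to\mathbb{R}^n$, the holonomy $\rho\colon\pi_1(M)\to\operatorname{Aff}(\mathbb{R}^n)$ with nilpotent image $\Gamma=\rho(\pi_1(M))$, and the linear holonomy $L(\Gamma)\subset\operatorname{GL}_n(\mathbb{R})$; the plan is to run the cycle $(1)\Rightarrow(2)\Rightarrow(3)\Rightarrow(4)\Rightarrow(1)$. Two of these are cheap. For $(4)\Rightarrow(1)$ there is nothing to prove, since a complete affine nilmanifold is complete by definition. For $(1)\Rightarrow(2)$ I would use a fundamental-domain argument: completeness plus compactness make $\Gamma$ act freely, properly discontinuously and cocompactly on $\mathbb{R}^n$, so a measurable fundamental domain $F$ has finite positive Lebesgue measure; since $\gamma F$ is again a fundamental domain it has the same covolume, while $\gamma$ scales volume by $|\det L(\gamma)|$, forcing $|\det L(\gamma)|=1$. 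Orientability upgrades this to $\det L(\gamma)=1$, so the standard volume form descends to a parallel volume form on $M$.

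\emph{First hard step: $(2)\Rightarrow(3)$.} A parallel volume form only gives $\det L(\gamma)=1$, which is much weaker than unipotence — a single elliptic rotation already has determinant $1$ — so here compactness and nilpotency must be used to annihilate the semisimple part of the holonomy. The plan is to pass to the Zariski closure $A$ of $\Gamma$ in $\operatorname{Aff}(\mathbb{R}^n)$, a nilpotent real algebraic group whose identity component factors as a product $T\cdot U$ of a torus $T$ (the semisimple directions) and a unipotent radical $U$. The eigenvalues of $L$ yield characters of $\Gamma$; the hyperbolic part, with eigenvalues off the unit circle, is excluded because expanding and contracting directions leave no room for the action to produce a compact quotient, and the elliptic part, with eigenvalues on the unit circle, must also collapse once one exploits that $\Gamma$ is a cocompact lattice in $A$, that the action is free, and that the developing map is a local diffeomorphism. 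Establishing that $T$ is trivial, i.e. that every $L(\gamma)$ is unipotent, is precisely the nilpotent case of the Markus conjecture and is the point at which compactness is indispensable.

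\emph{Second hard step: $(3)\Rightarrow(4)$.} Assuming every $L(\gamma)$ unipotent, Kolchin's theorem conjugates $L(\Gamma)$ into the upper unitriangular group, so $\Gamma$ lies in the connected, simply connected nilpotent Lie group $N=U_n\ltimes\mathbb{R}^n$, where $U_n$ denotes the unitriangular matrices. I would then take the syndetic hull $G\subseteq N$ of $\Gamma$: the minimal connected Lie subgroup containing $\Gamma$ as a cocompact lattice, whose dimension equals the Hirsch length of $\Gamma$, hence the cohomological dimension of the compact aspherical manifold $M^n$, namely $n$. The group $G$ acts on $\mathbb{R}^n$ by affine transformations with unipotent linear parts, so by the Kostant--Rosenlicht theorem every $G$-orbit is closed. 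A dimension count, $\dim G=n=\dim\mathbb{R}^n$, together with compactness of $M$, then forces one orbit to be open; being open, closed and contained in the connected space $\mathbb{R}^n$, it is all of $\mathbb{R}^n$, and triviality of the stabilizer makes the action simply transitive. The orbit map identifies $G$ with $\mathbb{R}^n$, so $D$ is a diffeomorphism onto $\mathbb{R}^n$ and $M\cong G/\Gamma$ is displayed as a complete affine nilmanifold, which also recovers $(1)$.

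\emph{Main obstacle.} I expect the two substantial steps to be $(2)\Rightarrow(3)$ and the open-orbit argument inside $(3)\Rightarrow(4)$, and of these the genuine crux is the passage from a dimension count to \emph{simple transitivity} of the syndetic hull $G$ on $\mathbb{R}^n$: ruling out that the $G$-action foliates $\mathbb{R}^n$ into positive-codimension orbits (which would prevent $M=G/\Gamma$ from being compact) is exactly where the hypothesis that $M$ is compact has to be fed back in, and it is the delicate estimate carried out in the cited work \cite{AffNilHol}.
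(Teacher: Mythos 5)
First, a point of comparison: the paper does not prove this statement at all --- it is quoted from Fried--Goldman--Hirsch \cite{AffNilHol} and used as a black box --- so your attempt has to stand on its own, and as it stands it has two genuine gaps, one of them precisely at the step you call ``cheap''.

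The step $(1)\Rightarrow(2)$ is circular. From the fact that $\gamma F$ is again a fundamental domain you conclude that it has the same covolume as $F$; but equality of volumes of fundamental domains is a property of \emph{measure-preserving} actions, i.e.\ it presupposes $|\det L(\gamma)|=1$, which is what you are proving. The usual exchange argument $\mu(F)=\sum_{\gamma}\mu(F\cap\gamma F')=\sum_{\gamma}|\det L(\gamma)|\,\mu\left(\gamma^{-1}F\cap F'\right)$ only closes up once unimodularity is known; taking $F'=\gamma_0F$ merely returns the change-of-variables identity $\mu(\gamma_0F)=|\det L(\gamma_0)|\,\mu(F)$, with no contradiction for any value of the determinant. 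The Hopf manifold $\left(\mathbb{R}^n\setminus\{0\}\right)/\langle x\mapsto 2x\rangle$ shows concretely that a free, properly discontinuous, cocompact affine action can have fundamental domains of different volumes; the only thing separating that example from your setting is completeness, so completeness must enter by non-formal means. Indeed, ``compact complete $\Rightarrow$ parallel volume'' is not cheap: for general holonomy it is an open problem (tied to the Markus and Auslander conjectures), and in the nilpotent case it is part of what \cite{AffNilHol} proves, by first establishing unipotence of the holonomy (via the syndetic hull of $\Gamma$, which acts simply transitively, and the theorem that simply transitive nilpotent affine actions are unipotent) and only then deducing $\det\equiv 1$. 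The workable order is $(1)\Rightarrow(3)\Rightarrow(2)$, not $(1)\Rightarrow(2)\Rightarrow(3)$.

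There is a second circularity in $(3)\Rightarrow(4)$. Your dimension count $\dim G=$ (Hirsch length of $\Gamma$) $=\operatorname{cd}(\pi_1 M)=n$ uses that $M$ is aspherical and that the holonomy representation is faithful, so that $\Gamma\cong\pi_1(M)$. Before completeness is established neither is available: for an incomplete compact affine manifold the developing map need not be a covering onto its image, the holonomy homomorphism need not be injective, and $M$ need not be aspherical (compare $S^1\times S^2$ realized as a Hopf manifold). Asphericity is a \emph{consequence} of the theorem, not an admissible input. Similarly, the claim that compactness of $M$ forces an open $G$-orbit requires an argument relating the developing image to the $G$-orbits of the hull; that relation (the developing image is an open orbit, and the developing map is a covering onto it) is exactly the hard content of \cite{AffNilHol}. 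So besides the admitted deferral of $(2)\Rightarrow(3)$, both the first and the last links of your cycle rest on facts equivalent to what is being proved, and the cycle does not close.
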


Since the linear holonomy is unipotent, there exists a real basis of $\mathbb{R}^3$ in which all matrices  $A$ of linear holonomy are upper unitriangular matrices. The following statement shows that this basis can be taken integer.

\begin{assertion} \label{A:CommIntEigenVect} If all matrices from a certain subset of integer matrices $M \subset \mathrm{GL}_n (\mathbb{Z})$ have a common eigenvector $v$ with eigenvalue $1$, then this vector can be chosen integer. Moreover, this vector $v$ can be taken from some basis $\mathbb{Z}^n$. \end{assertion}

\begin{proof}[Proof of Assertion~\ref{A:CommIntEigenVect}] Indeed, the vector $v$ is given by linear equations  $\left(A-E\right) v =0,$ which must be satisfied for all  $A \in M$. Since all matrices are integer, these linear equations have a common non-zero solution over $\mathbb{R}$ if and only if they have a common non-zero solution over $\mathbb{Q}$. It remains to use the following simple statement.

\begin{assertion}\label{A:IntVectInBasis} An integer vector $v \in \mathbb{Z}^n$  can be included in an integer basis  $\mathbb{Z}^n$ if and only if its coordinates are mutually comprime. \end{assertion}
Thus, any vector from $\mathbb{Q}^n$ is proportional to a vector from some basis $\mathbb{Z}^n$. Assertion~\ref{A:CommIntEigenVect} is proved. \end{proof}

After all the matrices $A$ of the linear holonomy are simultaneously reduced to the upper unitriangular form, the problem is solved by brute force.  Denote by $\operatorname{UT}_{n}(\mathbb{Z})$ the set of upper unitriangular matrices with integer coefficients. We need to find all discrete subgroups  $\Gamma \subset \operatorname{UT}_{3}(\mathbb{Z}) \ltimes \mathbb{R}^n$ that  act on $\mathbb{R}^3$  freely and properly discontinuously.  It is convenient to analyze the cases according to the rank of the subgroup of translations $T$ in the group $\Gamma$.

\begin{enumerate}

\item Let $\operatorname{rk} T =3$. Then the quotient by the action of $T$ is the torus $\mathbb{T}^3$. 
The following simple statement shows that $\Gamma=T$, since there are no nontrivial elements of finite order in $\Gamma/T$.

\begin{assertion} \label{A:QuotCompFiniteGroupQuot} If a group  $G$ acts freely and properly discontinuously on  $\mathbb{R}^n$, then any subgroup $H \subset G$, for which the space  $\mathbb{R}^n/H$ is compact, has finite index: $[G:H]< \infty$. \end{assertion}

Obviously, in this case, we get Series $1$ from Table~\ref{Tab:IntAff3Torus}. 

\item Let $\operatorname{rk} T =2$. In this case, it suffices to add one more element to $T$ so that the quotient by the action of the generated group becomes compact.  First of all, let us specify the form of the added element $g\in \Gamma$. Using Assertion~\ref{A:IntVectInBasis},  it is not hard to prove the following statement.

\begin{assertion} \label{A:UnipotCanon3dForm} Assume that all eigenvalues of an element $g\in \operatorname{AGL}_3( \mathbb{Z})$ are equal to $1$. Then there exists an integer affine frame in which $g$ has one of the following forms: \begin{equation} \label{Eq:AffZUnipoCanonForm} \left( \left( \begin{matrix} 1 & a & b^* \\ 0 & 1 & c \\ 0 & 0 & 1\end{matrix} \right), \left( \begin{matrix} 0 \\ 0 \\ z \end{matrix} \right) \right), \quad \left( \left( \begin{matrix} 1 & 0 & b \\ 0 & 1 & 0 \\ 0 & 0 & 1\end{matrix} \right), \left( \begin{matrix} 0 \\ y \\ z \end{matrix} \right) \right) \quad \text{or} \quad \left( E, \left( \begin{matrix} x \\ y \\ z \end{matrix} \right) \right) \end{equation} where $a, b, b^*, c \in \mathbb{Z}$ and $a, b, c \not =0$. \end{assertion}

Let us note that a change of integer affine frame corresponds to a conjugation by an element of $\operatorname{AGL}_3( \mathbb{Z})$. 

Then, for convenience of verification, let us give a formula for the commutator of two elements $ g_i= \left( \left( \begin{smallmatrix} 1 & a_i & b_i \\ 0 & 1 & c_i \\ 0 & 0 & 1\end{smallmatrix} \right), \left( \begin{smallmatrix} x_i \\ y_i \\ z_i \end{smallmatrix} \right) \right), i=1, 2$: \begin{equation} \label{Eq:UniTrCommutator} \small g_1 g_2 g_1^{-1} g_2^{-1} = \left( \left( \begin{matrix} 1 &  0 &   \left| \begin{matrix} a_1 & c_1 \\ a_2 & c_2 \end{matrix} \right| \\ 0 & 1 &  0\\ 0 & 0 & 1\end{matrix} \right), \left( \begin{matrix}  \left| \begin{matrix} a_1 & y_1 \\ a_2 & y_2 \end{matrix} \right| + \left| \begin{matrix} b_1 & z_1 \\ b_2 & z_2 \end{matrix} \right| - \left| \begin{matrix} a_1 & c_1 \\ a_2 & c_2 \end{matrix} \right| (z_1+z_2) \\  \left| \begin{matrix} c_1 & z_1 \\ c_2 & z_2 \end{matrix} \right| \\ 0 \end{matrix}  \right) \right)  \end{equation} In particular a conjugation of a translation is again a translation: \begin{equation} \label{Eq:TranslAdj} g= \left(A, \vec{b} \right), \quad t = \left(E, \vec{v} \right) \qquad \Rightarrow \qquad g t g^{-1} = \left( E, A \vec{v} \right)\end{equation}

It immediately follows from  \eqref{Eq:TranslAdj} that the group of translations $T \subset \Gamma$ generates a two-dimensional subspace that is invariant with respect to $L(\Gamma)$. Then it is easy to prove the following assertion.

\begin{assertion} \label{A:TwoTransAndUnip}  Consider a group  $\Gamma\subset \operatorname{AGL}_3( \mathbb{Z})$ generated by three affine transformations \[g= \left( \left( \begin{matrix} 1 & a & b \\ 0 & 1 & c \\ 0 & 0 & 1\end{matrix} \right), \left( \begin{matrix} x_1 \\ y_1 \\ z_1 \end{matrix} \right) \right), 
\qquad t_1=\left( E, \left( \begin{matrix} x_2 \\ y_2 \\ z_2 \end{matrix} \right) \right),  
\qquad t_2=\left( E, \left( \begin{matrix} x_3 \\ y_3 \\ z_3 \end{matrix} \right) \right) \]  where $g$ is one of the transformations from \eqref{Eq:AffZUnipoCanonForm} and translations $t_1$ and $t_2$ are linearly independent. The action of $\Gamma$ on $\mathbb{R}^3$ is free and properly discontinuous if and only if the vectors $\left( \begin{smallmatrix} x_2 \\ y_2 \\ z_2 \end{smallmatrix} \right)$ and $\left( \begin{smallmatrix} x_3 \\ y_3 \\ z_3 \end{smallmatrix} \right) $ span an $L(g)$- invariant subspace and  $\det \left( \begin{smallmatrix} x_1 & x_2 & x_3 \\ y_1 & y_2 & y_3  \\ z_1 & z_2 & z_3 \end{smallmatrix}  \right) \not =0$.
\end{assertion}

Further, note that we can change a basis of $T$ so that the relations \eqref{Eq:NilManFundGroupRel} will hold  for $h=t_1, t=t_2$ (there is also a commutative case, when $m=0$). Indeed, according to \eqref{Eq:TranslAdj} 
the subgroup of translations $T$ is $L(g)$-invariant.  An operator $L(g)$ is unipotent, therefore it also acts on $T$ as an unipotent operator. It remains to use the following fact, which follows from Assertion~\ref{A:CommIntEigenVect}.

\begin{assertion}\label{A:SL2Eigen1CanonView} For any unipotent operator $A \in \operatorname{SL}_{2}( \mathbb{Z})$  there exists an integer basis of $\mathbb{Z}^{2}$  in which the matrix  $A$ has the form $\left(\begin{matrix} 1 & m \\ 0 & 1 \end{matrix} \right)$, where $m \in \mathbb Z$, $m \geq 0$. The number $m$ 
is uniquely defined and is equal to the GCD of elements of $A$. \end{assertion}

It is easy to check that after a suitable choice of basis in $T$ for the groups from Assertion~\ref{A:TwoTransAndUnip} we get Series 2 from Table~\ref{Tab:IntAff3Torus} and Series 5 and 6 from Table~\ref{Tab:IntAff3NilMan}. Note that these series are distinguished by the Jordan normal forms of $L(g)$ in  $\mathbb{R}^3$ and when acting on $T$.

Assertion ~\ref{A:TwoTransAndUnip} guarantees that all these series define nilmanifolds. It remains to show that we have described all the series with $\operatorname{rk}T =2$.

\begin{assertion} \label{A:Rk2T3Gen} Let $\operatorname{rk}T =2$. Then there exists $g\in \Gamma$ such that the group $\Gamma$ is generated by $g$ and $T$. \end{assertion} 

\begin{proof}[Proof of Assertion~\ref{A:Rk2T3Gen}]  It suffices to take an element $g$, for which numbers $b_1$ in Series 2 and $a_1$ in Series 5 and 6  are the smallest possible natural numbers. Indeed, if the number 
 $a_1$ (or $b_1$) is the smallest possible, then by multiplying any other element of $g' \in \Gamma$ on $g^k$ we can always make the corresponding coefficient $a'_1$ (or $b'_1$) equal to zero. But according to Assertion~\ref{A:QuotCompFiniteGroupQuot} some degree of the element  $g'g^k \in \Gamma$ must belong to the subgroup generated by $g, t_1$ and $t_2$. This is possible only if  $g' g^k \in T$. Assertion~\ref{A:Rk2T3Gen} is proved. \end{proof}

\item Let $\operatorname{rk} T =1$. In this case, from  \eqref{Eq:TranslAdj} it follows that a vector of translation $\vec{v}$ is a common eigenvector for all operators $L(g)$, where $g\in \Gamma$. This case can be considered analogously to the case $\operatorname{rk} T =2$, but now we have to add two elements $g$ and $h$ to a translation 
$t\in T$. Let us consider several cases.

\begin{enumerate}

\item Suppose that there exists an element  $g \in \Gamma$ such that $L(g)= \left( \begin{smallmatrix} 1 & a_1 & b_1 \\ 0 & 1 & c_1 \\ 0 & 0 & 1\end{smallmatrix} \right)$, where $a_1, c_1\not=0$. By changing coordinates we can assume that $g=\left( \left( \begin{smallmatrix} 1 & a_1 & b_1 \\ 0 & 1 & c_1 \\ 0 & 0 & 1\end{smallmatrix} \right),\left( \begin{smallmatrix} 0 \\ 0 \\ z_1 \end{smallmatrix} \right) \right)$. Since $L(g)$ has only one eigenvector $t= \left(E, \left( \begin{smallmatrix} x_3 \\ 0 \\ 0 \end{smallmatrix} \right) \right)$. Further, without loss of generality, we can assume that $a_1$ is positive and the smallest possible and that the second element has the form  $h = \left( \left( \begin{smallmatrix} 1 & 0 & b_2 \\ 0 & 1 & c_2 \\ 0 & 0 & 1\end{smallmatrix} \right), \left( \begin{smallmatrix} x_2 \\ y_2 \\ z_2 \end{smallmatrix} \right) \right)$. First consider the case $c_2 =0$.

\begin{assertion} \label{A:OneTransAndUnipOneCommEigen}  Consider a group  $\Gamma \subset \operatorname{AGL}_3( \mathbb{Z})$ generated by three affine transformations \[\footnotesize g= \left( \left( \begin{matrix} 1 & a_1 & b_1 \\ 0 & 1 & c_1 \\ 0 & 0 & 1\end{matrix} \right), \left( \begin{matrix} 0 \\ 0 \\ z_1 \end{matrix} \right) \right), 
\quad h = \left( \left( \begin{matrix} 1 & 0 & b_2 \\ 0 & 1 & 0 \\ 0 & 0 & 1\end{matrix} \right), \left( \begin{matrix} x_2 \\ y_2 \\ z_2 \end{matrix} \right) \right),%
\quad t= \left(E, \left( \begin{matrix} x_3 \\ 0 \\ 0 \end{matrix} \right) \right) \]  where $a_1, b_2, c_1, x_3 \not =0$ and $\operatorname{rk}T=1$.  The action of $\Gamma$ on $\mathbb{R}^3$ is free and properly discontinuous if and only if  $z_2 =0, z_1 y_2 x_3 \not =0$ and $\frac{a_1 y_2 - b_2 z_1}{x_3} \in \mathbb{Q}$. The quotient space $\mathbb{R}^3/ \Gamma$ will then be a compact nilmanifold. \end{assertion}

\begin{proof}[Proof of Assertion~\ref{A:OneTransAndUnipOneCommEigen}] Indeed, according to formula  \eqref{Eq:UniTrCommutator} \begin{equation} \label{Eq:CommUniAllUniOneAngle}  [g, h] = \left( E, \left( \begin{matrix} a_1 y_2 + b_1z_2 - b_2 z_1 \\ c_1 z_2  \\ 0 \end{matrix} \right) \right).\end{equation} Since $\operatorname{rk}T=1$,  the vectors $[g, h]$ and $t$ must be commensurable, hence $z_2 =0$ and $\frac{a_1 y_2 - b_2 z_1}{x_3} \in \mathbb{Q}$. Obviously, if $z_1 y_2 x_3 =0$, then the action is not free. It can be explicitly checked that under the specified conditions the action is free and properly discontinuous. It is easy to see that one of the fundamental domains is contained in the  $0 \leq x \leq x_3, 0 \leq y \leq y_2, 0 \leq z \leq z_1$ and therefore $\mathbb{R}^3/ \Gamma$  is compact. Assertion~\ref{A:OneTransAndUnipOneCommEigen} is proved. \end{proof}

Let us now show that the case $c_2 \not =0$ is impossible.

\begin{assertion} \label{A:OneTransAndUnipOneCommEigenBad} A group   $\Gamma \subset \operatorname{AGL}_3( \mathbb{Z})$ generated by three affine transformations  \[\footnotesize g= \left( \left( \begin{matrix} 1 & a_1 & b_1 \\ 0 & 1 & c_1 \\ 0 & 0 & 1\end{matrix} \right), \left( \begin{matrix} 0 \\ 0 \\ z_1 \end{matrix} \right) \right), 
\quad h = \left( \left( \begin{matrix} 1 & 0 & b_2 \\ 0 & 1 & c_2 \\ 0 & 0 & 1\end{matrix} \right), \left( \begin{matrix} x_2 \\ y_2 \\ z_2 \end{matrix} \right) \right),%
\quad t= \left(E, \left( \begin{matrix} x_3 \\ 0 \\ 0 \end{matrix} \right) \right) \]  where  $a_1, c_1, c_2, x_3 \not =0$, and $\operatorname{rk} T =1$, cannot act on $\mathbb{R}^3$ freely and discontinuously. \end{assertion}

\begin{proof}[Proof of Assertion~\ref{A:OneTransAndUnipOneCommEigenBad}] 
According to the formula \eqref{Eq:UniTrCommutator} the linear part of the commutator $L([g, h]) = \left( \begin{smallmatrix} 1 & 0 & a_1 c_2 \\ 0 & 1 & 0 \\ 0 & 0 & 1\end{smallmatrix} \right)$. 
If the group $\Gamma_1$ generated by $g, [g, h]$ and  $t$ acts freely and properly discontinuously, then according to the Assertion~\ref{A:OneTransAndUnipOneCommEigen} the quotient $\mathbb{R}^3/ \Gamma_1$ will be compact. 
But then by Assertion~\ref{A:QuotCompFiniteGroupQuot} some degree of $h$ has to belong to $\Gamma_1$, which is impossible. Assertion~\ref{A:OneTransAndUnipOneCommEigenBad}  is proved. \end{proof}

It is easy to see that if in Assertion~\ref{A:OneTransAndUnipOneCommEigen} we take $t$ as a generator of the  subgroup of translations $T$, then we get Series 4 from Table~\ref{Tab:IntAff3Torus} (if $[g, h]=e$)  or Series 9 from Table~\ref{Tab:IntAff3NilMan}.

\item Assume that linear parts of all elements $g\in \Gamma$ have the form $L(g) =\left(\begin{smallmatrix}1& a_g & b_g \\ 0&1 & 0 \\ 0 & 0 & 1\end{smallmatrix} \right)$.  First of all, let us find out how we can reduce the matrix $X = \left( \begin{smallmatrix} a_g & b_g \\ a_h & b_h \end{smallmatrix} \right) $ for a given pair of elements $g,  h \in \Gamma$. Since we can change the elements  $g, h$ and take other elements of the basis $e_2, e_3$, the matrix $X$ is defined up to the left-right $\operatorname{GL}(2, \mathbb{Z}) \times \operatorname{GL}(2, \mathbb{Z}) $ action.  The following statement describes the orbits of this action on $\operatorname{Mat}_{2 \times 2}\left( \mathbb{Z}\right)$.

\begin{assertion} \label{A:LeftRigthGL2ZOrb} For any integer matrix $\left( \begin{smallmatrix} a & b \\ c & d \end{smallmatrix} \right) \in \operatorname{Mat}_{2\times 2}(\mathbb{Z})$ there exist $C, D \in \operatorname{GL}(2, \mathbb{Z})$ such that \[ C\left( \begin{matrix} a & b \\ c & d \end{matrix} \right) D = \left( \begin{matrix} n & 0 \\ 0 & kn \end{matrix} \right),\] where  $n = \text{GCD} (a,b, c,d)$ and $k = \frac{\left| ad - bc\right|}n^2$. Here we formally assume that $k=n=0$ if $a=b=c=d=0$. \end{assertion}

\begin{proof}[Proof of Assertion~\ref{A:LeftRigthGL2ZOrb}] Among points  $\left( \begin{smallmatrix} a' & b' \\ c' & d' \end{smallmatrix} \right)$  of the orbits of the original matrix let us choose the one such that $a'$ is the smallest possible natural number. We claim that  $a' = n$. Indeed, using the Euclidean algorithm, one can always find  matrices $C$ and $D$ such that  $C \left( \begin{smallmatrix} a' \\ c' \end{smallmatrix} \right) = \left( \begin{smallmatrix} \text{GCD} (a',c') \\ 0 \end{smallmatrix} \right)$ and $\left( \begin{smallmatrix} a' & b' \end{smallmatrix} \right) D = \left( \begin{smallmatrix} \text{GCD} (a',b') & 0 \end{smallmatrix} \right) $.  Therefore, since $a'$  is the smallest possible, we can assume that $b'=c'=0$. Then, for the same reasoning $a' \bigr| d'$ in the matrix $\left( \begin{smallmatrix} a' & 0 \\ 0 & d' \end{smallmatrix} \right)$.

Then $a'$ is the $\text{GCD}$ of the elements of the final matrix.  But the left-right action does not change the $\text{GCD}$ (since it obviously does not decrease under multiplication on $C$ and $D$, and since these  matrices are invertible). Therefore, in the final matrix $a'=n$.  It remains to note that we can assume $d' = kn$, since under the left-right action, the determinant is preserved up to sign. Assertion~\ref{A:LeftRigthGL2ZOrb} is proved. \end{proof}

Thus, we can assume that the matrix  $\left( \begin{smallmatrix} a_g & b_g \\ a_h & b_h \end{smallmatrix} \right) $ has the form $\left( \begin{smallmatrix} a_1 & 0 \\ 0 & n a_1 \end{smallmatrix} \right) $. By changing the origin, we can always make the corresponding components of the translation equal to zero $x_1= y_1 =0$. The following statement is proved analogously to Assertion~\ref{A:OneTransAndUnipOneCommEigen}.

\begin{assertion} \label{A:OneTransAndUnipOneCommEigenCZero}  Consider a group  $\Gamma \subset \operatorname{AGL}_3( \mathbb{Z})$ generated by three affine transformations \[\footnotesize g= \left( \left( \begin{matrix} 1 & a_1 & 0 \\ 0 & 1 & 0 \\ 0 & 0 & 1\end{matrix} \right), \left( \begin{matrix} 0 \\ y_1 \\ z_1 \end{matrix} \right) \right), 
\quad h = \left( \left( \begin{matrix} 1 & 0 & n a_1 \\ 0 & 1 & 0 \\ 0 & 0 & 1\end{matrix} \right), \left( \begin{matrix} 0 \\ y_2 \\ z_2 \end{matrix} \right) \right),%
\quad t= \left(E, \left( \begin{matrix} x_3 \\ 0 \\ 0 \end{matrix} \right) \right) \]  where $a_1, x_3 \not =0$ and $\operatorname{rk} T=1$. The action of  $\Gamma$ on $\mathbb{R}^3$ is free and properly discontinuous if and only if the vector parts of the elements $v(g), v(h)$ and $v(t)$  are linearly independent and $\frac{a_1 y_2 - n a_1 z_1}{x_3} \in \mathbb{Q}$.  Then, the quotient space $\mathbb{R}^3/ \Gamma$ is a compact nilmanifold. \end{assertion}

Thus, we obtain Series  3 from Table~\ref{Tab:IntAff3Torus} (if $[g, h]=e$) or Series 7 from Table~\ref{Tab:IntAff3NilMan}.

\item It remains to consider the case when the linear parts of all elements have the form $L(g) =\left(\begin{smallmatrix}1& 0 & b_g \\ 0&1 & c_g \\ 0 & 0 & 1\end{smallmatrix} \right)$.  
 Similarly to the previous cases, we can reduce the matrix $\left( \begin{smallmatrix} b_g & b_h \\ c_g & c_h \end{smallmatrix} \right) $ to $\left( \begin{smallmatrix} 0 & n c_1 \\ c_1 & 0\end{smallmatrix} \right) $  and after that bring the elements to the form from Series $8$ from Table~\ref{Tab:IntAff3NilMan}.  It remains to show that the action is free and properly discontinuous, and the quotient is compact exactly when \eqref{Eq:NilManSpecialCond} holds.  For that it is convenient to switch from $\operatorname{AGL}_3( \mathbb{Z})$  to the group of all affine transfromations  $\operatorname{Aff}_3(\mathbb{R})$  and take a vector of translation as the first basis vector.

\begin{assertion} \label{A:OneTransRealCase}  Consider a group  $\Gamma \subset \operatorname{Aff}_3( \mathbb{R})$ generated by three affine transformations  \[ \footnotesize g= \left( \left( \begin{matrix} 1 & 0 & b_1 \\ 0 & 1 & c_1 \\ 0 & 0 & 1\end{matrix} \right), \left( \begin{matrix} x_1 \\ y_1 \\ z_1 \end{matrix} \right) \right), 
\quad h = \left( \left( \begin{matrix} 1 & 0 & b_2 \\ 0 & 1 & c_2 \\ 0 & 0 & 1\end{matrix} \right), \left( \begin{matrix} x_2 \\ y_2 \\ z_2 \end{matrix} \right) \right),%
\quad t= \left(E, \left( \begin{matrix} x_3 \\ 0 \\ 0 \end{matrix} \right) \right) \]  where $b_i, c_i \in \mathbb{R}$, $(c_1, c_2) \not =(0, 0), (z_1, z_2) \not =(0, 0)$, $x_3 \not =0$ and $\operatorname{rk} T=1$.  The action of $\Gamma$ on $\mathbb{R}^3$ is free and properly discontinuous if and only if  $c_2 z_1 = c_1 z_2$, $\frac{b_1 z_2 -b_2 z_1}{x_3}  \in \mathbb{Q}$ and \begin{equation} \label{Eq:OneTransRealCaseCond}  \left| \begin{matrix} c_1 & y_1 - \frac{c_1 z_1}{2} \\ c_2 & y_2 - \frac{ c_2 z_2}{2} \end{matrix} \right| \not =0. \end{equation} The quotient space $\mathbb{R}^3/ \Gamma$ is then a compact nilmanifold. \end{assertion}

\begin{proof}[Proof of Assertion~\ref{A:OneTransRealCase}  ] Indeed, according to \eqref{Eq:UniTrCommutator} \begin{equation} \label{Eq:CommUniAllUniOneAngle}  [g, h] = \left( E, \left( \begin{matrix} b_1 z_2  - b_2 z_1 \\ c_1 z_2 - c_2 z_1 \\ 0 \end{matrix} \right) \right).\end{equation}  Since $\operatorname{rk}T=1$ the vectors $[g, h]$ and $t$ must be commensurable and we get  $c_2 z_1 = c_1 z_2$ and $\frac{b_1 z_2 -b_2 z_1}{x_3} \in \mathbb{Q}$. Since $(c_1, c_2)\not =(0, 0)$, there exists $u \in \mathbb{R}$ such that  $z_1 =c_1 u, z_2 = c_2 u$.  Moreover,
$u \not =0$, since $(z_1, z_2) \not =(0, 0)$. The remaining condition \eqref{Eq:OneTransRealCaseCond} 
 is equivalent to the fact that the matrices  \[ \footnotesize g= \left( \left( \begin{matrix}  1 & c_1 \\  0 & 1\end{matrix} \right), \left( \begin{matrix}  y_1 \\c_1u \end{matrix} \right) \right), 
\quad h = \left( \left( \begin{matrix}  1 & c_2 \\  0 & 1\end{matrix} \right), \left( \begin{matrix}  y_2 \\ c_2 u \end{matrix} \right) \right),\] generates a discrete subgroup of rank $ 2 $ in the commutative two-dimensional group  \[\left( \left( \begin{matrix}  1 & c \\  0 & 1\end{matrix} \right), \left( \begin{matrix}  \frac{c^2 u}{2} +t \\ c u \end{matrix} \right) \right), \qquad c, t \in \mathbb{R}\] which acts freely and transitively on the plane.  It is precisely the case when the action is free and properly discontinuous  and the quotient space $\mathbb{R}^3/\Gamma$ is compact. Assertion~\ref{A:OneTransRealCase} is proved. \end{proof}

It can be easily seen that condition \eqref{Eq:OneTransRealCaseCond} for Series 8 from Table~\ref{Tab:IntAff3NilMan}  can be rewritten as \eqref{Eq:NilManSpecialCond}, which had to be shown.

\end{enumerate}

\item Let $\operatorname{rk} T =0$. It remains to prove the following assertion.

\begin{assertion} \label{A:NoUniRkTZero} Affine holonomy of a compact integral $3$-dimensional affine nilmanifold contains at least one nontrivial translation.  In other words, for the corresponding subgroup $\Gamma \subset \operatorname{AGL}_3( \mathbb{Z})$ the translation subgroup is nontrivial: $\operatorname{rk} T \not =0$.\end{assertion}

\begin{proof}[Proof of Assertion~\ref{A:NoUniRkTZero} ]  On the contrary, assume that there is a group $\Gamma$, 
that acts freely and properly discontinuously on $\mathbb{R}^3$ so that $\mathbb{R}^3/\Gamma$  is compact and  $\operatorname{rk} T  =0$. Without loss of generality, we can assume that the generators of $\Gamma$ have the form \[\footnotesize g= \left(\left(\begin{matrix}1& a_1 & b_1
\\ 0&1 & c_1 \\ 0 & 0 & 1\end{matrix} \right),\left(\begin{matrix}0
\\ 0 \\ z_1\end{matrix} \right) \right), \quad %
h=\left(\left(\begin{matrix}1& 0 &
b_2 \\ 0&1 & c_2 \\ 0 & 0 & 1\end{matrix} \right),\left(\begin{matrix}x_2
\\ y_2 \\ z_2 \end{matrix} \right)\right), \quad %
t=\left(\left(\begin{matrix}1& 0 &
b_3 \\ 0&1 & 0 \\ 0 & 0 & 1\end{matrix} \right),\left(\begin{matrix}x_3
\\ y_3 \\ z_3 \end{matrix} \right)\right) \] where $a_1, c_1, c_2, b_3, z_1 \not =0$ and components $a_1, c_2, b_3$  are the smallest possible natural numbers. It is easy to check that if we take out any of the elements $g, h, t$, then the quotient $\mathbb{R}^3/\Gamma$ cannot be compact. Since $\operatorname{rk} T  =0$ we have $[h, t]=e$. Therefore, using formula \eqref{Eq:UniTrCommutator} (or \eqref{Eq:CommUniAllUniOneAngle}) we get \[ -c_2 z_3 =0, \qquad b_2 z_3 - b_3 z_2 =0 \qquad \Rightarrow \qquad z_2 =z_3 =0\] Analogously $[g, t]=e$ and thus $a_1 y_3 = b_3 z_1.$ Again, using the formula \eqref{Eq:UniTrCommutator} we get \[ [g, h] = \left( \begin{matrix} 1 & 0 & a_1 c_2 \\ 0 & 1 & 0 \\ 0 & 0 & 1\end{matrix} \right) \left( \begin{matrix} a_1 y_2 + b_1 z_2 - b_2 z_1 - a_1 c_2 (z_1+z_2) \\ c_1 z_2 - c_2 z_1 \\ 0 \end{matrix}  \right) \] Since $b_3$ is the smallest possible $[g, h]=t^m$ holds. But then \[ a_1 c_2 = m b_3, \qquad  -c_2 z_1 = my_3 =  m \frac{b_3 z_1}{a_1}.\] Thus $a_1c_2 z_1= -a_1c_2 z_1$, but $a_1 c_2 z_1 \not =0$, and we get a contradiction. Assertion~\ref{A:NoUniRkTZero} is proved.  \end{proof}

\end{enumerate}

Theorem \ref{T:IntAff3NilMan} is proved. \end{proof}

\section{Integral affine 3-solvmanifolds} \label{S:Solv3dAff}

Let us describe the remaining complete integral affine three-dimensional manifolds that are not covered by nilmanifolds. All of them contain hyperbolic elements in the group $\Gamma$ and generalize the following simple example.

\begin{example} \label{Ex:IntAffTorusMap} For any matrix $A = \left(\begin{smallmatrix} a & b \\ c & d \end{smallmatrix} \right) \in \operatorname{SL}_{2}( \mathbb{Z})$ the quotient of $\mathbb{R}^3$ by the group $\Gamma$ generated by elements \[g= \left( \left( \begin{matrix} 1 & 0 & 0 \\ 0 & a & b \\
0 & c & d \end{matrix} \right), \left(
\begin{matrix} 1 \\ 0 \\ 0 \end{matrix} \right) \right) , \quad t_1 = \left( \left( \begin{matrix} 1 & 0 & 0 \\ 0 & 1 & 0 \\
0 & 0 & 1 \end{matrix} \right), \left(
\begin{matrix} 0 \\ 1 \\ 0 \end{matrix} \right) \right) \quad  t_2 = \left( \left( \begin{matrix} 1 & 0 & 0 \\ 0 & 1 & 0 \\
0 & 0 & 1 \end{matrix} \right), \left(
\begin{matrix} 0 \\ 0 \\ 1 \end{matrix} \right) \right)\]  
is an integral affine $3$-dimensional manifold $M^3$. Moreover,  $M^3$  is diffeomorphic to a  $\mathbb{T}^2$-bundle over $S^1$ with monodromy matrix $A$. \end{example}

\begin{remark} Note that there are naturally defined subsets in affine manifolds with transformation maps from $\operatorname{GL}_n(\mathbb{Z}) \ltimes \mathbb{Z}^n$ (that is, all their elements are integers).  For example, these are points that have integer coordinates in any chart (or rational coordinates from $\frac{1}{k} \mathbb{Z}^n$). \end{remark}

\begin{remark} It is easy to see that two manifolds from Example ~\ref{Ex:IntAffTorusMap} are affinely diffeomorphic if and only if the corresponding matrices  $A_1$ and $A_2$ lie in the same conjugacy class in the group 
$\operatorname{SL}_{2}( \mathbb{Z})$.  Description of such conjugacy classes is a well-studied question (see, for example, \cite{Karpenkov13}).  \end{remark}

If a matrix $A\in \operatorname{SL}_{2}( \mathbb{Z})$ is not unipotent, then in some basis it has the form  $ \left( \begin{smallmatrix} e^s & 0 \\ 0 & e^{-s} \end{smallmatrix} \right)$. The groups from Example~\ref{Ex:IntAffTorusMap} with non-unipotent  matrix $A$  correspond to the next series of affine crystallographic groups from \cite{Aff3Cryst}.

\begin{example} \label{Ex:SolvAffGeneral} For any  $\lambda \in \mathbb{R}$  the group \begin{equation} \label{Eq:HyperGroup3dAffCryst}  I_{\lambda} = \left\{  \left( \left( \begin{matrix}  1& \lambda e^{s}u  & \lambda e^{-s}t \\ 0 & e^{s} & 0 \\ 0 & 0 & e^{-s}  \end{matrix} \right)   \left( \begin{matrix} s+ \lambda ut \\ t \\ u  \end{matrix} \right) \right), \qquad s, t, u \in \mathbb{R} \right\} \end{equation} 
acts freely and transitively on  $\mathbb{R}^3$. As a consequence, for any discrete subgroup $\Gamma \subset I_{\lambda}$ the quotient $\mathbb{R}^3/\Gamma$ is an affine manifold. It is easy to check that if the quotient  $\mathbb{R}^3/\Gamma$  is compact, then the group $\Gamma$ possesses generators of the form \begin{equation} \label{Eq:SolvAffGenDisGr}   \begin{gathered} g= \left( \left( \begin{matrix} 1 & \lambda e^s z_0  & \lambda e^{-s} y_0 \\ 0 & e^s & 0 \\
0 & 0 & e^{-s} \end{matrix} \right), \left(
\begin{matrix} s+ \lambda z_0 y_0 \\ y_0 \\ z_0  \end{matrix} \right) \right) , \\
h_1 = \left( \left( \begin{matrix} 1 & \lambda z_1 & \lambda y_1 \\ 0 & 1 & 0 \\
0 & 0 & 1 \end{matrix} \right), \left(
\begin{matrix} \lambda y_1 z_1 \\ y_1 \\ z_1 \end{matrix} \right) \right) \quad  h_2 = \left( \left( \begin{matrix} 1 & \lambda z_2 & \lambda y_2 \\ 0 & 1 & 0 \\ 0 & 0 & 1 \end{matrix} \right), \left( \begin{matrix} \lambda y_2 z_2 \\ y_2 \\ z_2 \end{matrix} \right) \right),  \end{gathered} \end{equation} where $\operatorname{det} \left( \begin{smallmatrix} y_1 & z_1 \\ y_2 & z_2 \end{smallmatrix} \right) \not =0 $ and $\left( \begin{smallmatrix} e^s & 0 \\ 0 & e^{-s} \end{smallmatrix} \right) \left( \begin{smallmatrix} y_1 & z_1 \\ y_2 & z_2 \end{smallmatrix} \right)= \left( \begin{smallmatrix} y_1 & z_1 \\ y_2 & z_2 \end{smallmatrix} \right) A$ for some matrix $A \in \operatorname{SL}_{2}( \mathbb{Z})$. \end{example}

If $\lambda =0$,  then the group  $\Gamma$ from Example~\ref{Ex:SolvAffGeneral}  is generated by three elements, two of which are translastions. Using Assertion~\ref{A:CommIntEigenVect}, it is easy to check that any such subgroup  $\Gamma \subset  \operatorname{AGL}_3( \mathbb Z)$ is conjugate in $\operatorname{AGL}_3( \mathbb Z)$ to one of the following subgroups.

\begin{assertion} \label{A:AffHyper1}  Consider a group  $\Gamma\subset \operatorname{AGL}_3( \mathbb{Z})$, generated by three affine transformations \[g= \left( \left( \begin{matrix} 1 & p & q \\ 0 & a & b \\ 0 & c & d\end{matrix} \right), \left( \begin{matrix} x_1 \\ 0 \\ 0 \end{matrix} \right) \right), 
\qquad t_1=\left( E, \left( \begin{matrix} x_2 \\ y_2 \\ z_2 \end{matrix} \right) \right),  
\qquad t_2=\left( E, \left( \begin{matrix} x_3 \\ y_3 \\ z_3 \end{matrix} \right) \right), \] where $a +d>2$, the vector parts $v(g), v(t_1)$ and $v(t_2)$ are linearly independent and $ \left( \begin{smallmatrix} 1 & p & q \\ 0 & a & b \\ 0 & c & d\end{smallmatrix} \right) \left( \begin{smallmatrix} x_2 & x_3 \\ y_2 & y_3 \\ z_2 & z_3\end{smallmatrix} \right) =\left( \begin{smallmatrix} x_2 & x_3 \\ y_2 & y_3 \\ z_2 & z_3\end{smallmatrix} \right)  A  $ for some matrix $A \in \operatorname{SL}_{2}( \mathbb{Z})$.Then the quotient $\mathbb{R}^3/\Gamma$ is an integral affine $3$-dimensional manifold $M^3$ that is diffeomorphic to a $\mathbb{T}^2$-bundle over $S^1$ with monodromy matrix  $A$.
\end{assertion}

It remains to analyze the case when $\Gamma \subset \operatorname{AGL}_3( \mathbb{Z})$  is conjugate in $\operatorname{Aff}_3(\mathbb{R})$ to a discrete subgroup in  $I_{\lambda}$ and $\lambda \not =0$. Using Assertions~\ref{A:CommIntEigenVect} and \ref{A:LeftRigthGL2ZOrb} we can simplify the linear parts of generators of $\Gamma$. Moreover, by choosing a suitable origin, we can replace a conjugation by an element from $\operatorname{Aff}_3(\mathbb{R})$ with conjugation by an element from $\operatorname{GL}_3(\mathbb{R})$. Thus, we obtain the following statement.

\begin{assertion} \label{A:AffHyper2}  Assume that a subgroup $\Gamma \subset \operatorname{AGL}_3( \mathbb{Z})$ is conjugate in  $\operatorname{Aff}_3(\mathbb{R})$ to some discrete subgroup in the group $I_{\lambda}$ (given by \eqref{Eq:HyperGroup3dAffCryst}), where $\lambda \not =0$. Then there is an integer affine frame in which  $\Gamma$ is generated by elements of the form
\[g= \left( \left( \begin{smallmatrix} 1 & p & q \\ 0 & a & b \\ 0 & c & d\end{smallmatrix} \right), \left( \begin{smallmatrix} x_1 \\ y_1 \\ z_1 \end{smallmatrix} \right) \right), 
\quad h_1=\left(  \left( \begin{smallmatrix} 1 & a_1 & 0 \\ 0 & 1 & 0 \\
0 & 0 & 1 \end{smallmatrix} \right), \left( \begin{smallmatrix} x_2 \\ y_2 \\ z_2 \end{smallmatrix} \right) \right),  
\quad h_2=\left(  \left( \begin{smallmatrix} 1 & 0 & na_1 \\ 0 & 1 & 0 \\
0 & 0 & 1 \end{smallmatrix} \right), \left( \begin{smallmatrix} x_3 \\ y_3 \\ z_3 \end{smallmatrix} \right) \right), \]
where $a_1, n \in \mathbb{N}$ and $\left( \begin{smallmatrix} a_1 & 0\\ 0 & na_1\end{smallmatrix} \right) \left( \begin{smallmatrix} a & b \\ c & d \end{smallmatrix} \right) = A \left( \begin{smallmatrix} a_1 & 0\\ 0 & na_1\end{smallmatrix} \right)$ for some matrix $A \in \operatorname{SL}_{2}( \mathbb{Z})$. In this case, generators $g, h_1, h_2$ have the form \eqref{Eq:SolvAffGenDisGr} when conjugated by some matrix $\left( \begin{smallmatrix} \alpha & u \\ 0 & P \end{smallmatrix} \right)$, where $\alpha \in \mathbb{R}^*, u \in \operatorname{Mat}_{1\times 2}(\mathbb{R})$ and $P \in \operatorname{GL}_{2}(\mathbb{R})$. \end{assertion}

\begin{remark}The condition $P A P^{-1} =\left( \begin{smallmatrix} e^s & 0 \\ 0 & e^{-s} \end{smallmatrix} \right)$ defines the matrix $P$  up to multiplication by a constant. If we fix the matrix $P$, then for any  $\alpha \in \mathbb{R}^*, u \in \operatorname{Mat}_{1\times 2}(\mathbb{R})$ the condition of conjugacy of $g, h_1, h_2$ to the elements  \eqref{Eq:SolvAffGenDisGr} provides restrictions on the coefficients of the vector parts $x_i, y_i, z_i$. \end{remark}

Now let us formulate the main result of this paper. It is convenient to divide integral affine $3$-dimensional manifolds into three classes according to their type of geometry (for more information about geometries on $3$-dimensional manifolds, see \cite{Thur01} or \cite{Skott86}).

\begin{theorem}\label{T:IntAffSolv3d} Any compact integral affine $3$-dimensional manifold $M^3$ has a geometric structure modeled on $E^3, \operatorname{Nil}$ or $\operatorname{Sol}$. The geometric structure on $M$ is completely determined by $\pi_1(M)$.

\begin{enumerate}

\item If  $\pi_1(M)$ is virtually Abelian, then  $M^3$ is a Euclidean manifold. Moreover,  $M^3$ is finitely covered by  one of the manifolds from Theorem~\ref{T:IntAff3NilMan} described in Table~\ref{Tab:IntAff3Torus}. 

\item If $\pi_1(M)$ is virtually nilpotent, but not virtually Abelian, then  $M^3$ is a $\operatorname{Nil}$-manifold. Moreover, $M^3$ is finitely covered by  one of the manifolds from Theorem~\ref{T:IntAff3NilMan}, described in Table ~\ref{Tab:IntAff3NilMan}. 

\item If $\pi_1(M)$ is solvable, but not virtually nilpotent, then $M^3$ is a $\operatorname{Sol}$-manifold. Moreover,  $M^3$  is finitely covered by an integral affine manifold $\mathbb{R}^3/\Gamma$  for one of the groups $\Gamma$,  described in the Assertions~\ref{A:AffHyper1}  or \ref{A:AffHyper2}.

\end{enumerate}

\end{theorem}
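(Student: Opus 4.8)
The plan is to organize the argument by the algebraic type of $\Gamma = \pi_1(M)$ and to reduce each case to results already established. By the Assumption $M \approx \mathbb{R}^3/\Gamma$ is a complete orientable integral affine manifold, so $\Gamma \subset \operatorname{SL}_3(\mathbb{Z}) \ltimes \mathbb{R}^3$; by the theorem of \cite{Aff3Cryst} quoted above, $\Gamma$ is solvable and $M$ is aspherical and finitely covered by a $\mathbb{T}^2$-bundle over $S^1$. Any such $\Gamma$ is either virtually abelian, virtually nilpotent but not virtually abelian, or solvable but not virtually nilpotent, and by the standard dictionary between closed aspherical $3$-manifolds with solvable fundamental group and $3$-dimensional geometries (see \cite{Thur01}, \cite{Skott86}) these three alternatives correspond precisely to the $E^3$, $\operatorname{Nil}$, and $\operatorname{Sol}$ geometries. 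This already yields the first assertion: $M$ is modeled on one of the three geometries, and the model is determined by $\pi_1(M)$.

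For the covering statements in parts $1$ and $2$ I would pass to a nilpotent finite cover and invoke Theorem~\ref{T:IntAff3NilMan}. If $\pi_1(M)$ is virtually nilpotent, choose a nilpotent subgroup $\Gamma' \subset \Gamma$ of finite index; since $\Gamma'$ acts freely on $\mathbb{R}^3$ it is torsion-free, so the cover $M' = \mathbb{R}^3/\Gamma'$ is a compact aspherical $3$-manifold with torsion-free nilpotent fundamental group, hence a nilmanifold (a torus when $\Gamma'$ is abelian). The integral affine structure lifts to a complete integral affine structure on $M'$, so Theorem~\ref{T:IntAff3NilMan} identifies $M'$ affinely with one of the models in Tables~\ref{Tab:IntAff3Torus} and \ref{Tab:IntAff3NilMan}. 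In the virtually abelian case $\Gamma'$ may be chosen abelian, so $M'$ is a torus appearing in Table~\ref{Tab:IntAff3Torus} (part $1$); in the virtually nilpotent but not virtually abelian case no finite-index subgroup is abelian, so $\Gamma'$ is non-abelian and $M'$ appears in Table~\ref{Tab:IntAff3NilMan} (part $2$).

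For part $3$ the group $\Gamma$ contains a \emph{hyperbolic} element, one whose linear part has an eigenvalue of absolute value $\neq 1$, and by the classification of three-dimensional affine crystallographic groups in \cite{Aff3Cryst} every such $\Gamma$ is conjugate in $\operatorname{Aff}_3(\mathbb{R})$ to a discrete subgroup of one of the groups $I_\lambda$ of Example~\ref{Ex:SolvAffGeneral}. Replacing $\Gamma$ by the finite-index subgroup isomorphic to $\mathbb{Z}^2 \rtimes \mathbb{Z}$, generated by a single hyperbolic element together with a normal rank-$2$ subgroup, yields the finite cover named in the statement. Imposing integrality $L(\Gamma) \subset \operatorname{SL}_3(\mathbb{Z})$ then splits into two subcases. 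For $\lambda = 0$ two of the three generators are translations and, extracting a common integer eigenvector by Assertion~\ref{A:CommIntEigenVect}, one reaches exactly the normal form of Assertion~\ref{A:AffHyper1}. For $\lambda \neq 0$ one first reduces the linear parts of the two non-hyperbolic generators by the left-right $\operatorname{GL}_2(\mathbb{Z})$-action of Assertion~\ref{A:LeftRigthGL2ZOrb} and then chooses the origin so that the conjugation into $I_\lambda$ is realized by a block-triangular matrix $\left( \begin{smallmatrix} \alpha & u \\ 0 & P \end{smallmatrix} \right)$; this is the content of Assertion~\ref{A:AffHyper2}. Either way $M$ is finitely covered by one of the listed groups.

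The main obstacle is part $3$. Parts $1$ and $2$ amount to a bookkeeping reduction to the already proved Theorem~\ref{T:IntAff3NilMan}, and the geometric trichotomy is standard $3$-manifold theory. In the $\operatorname{Sol}$ case, by contrast, one must reconcile the \emph{real} normal form supplied by \cite{Aff3Cryst} (a subgroup of $I_\lambda$) with the \emph{integral} constraint: the delicate point is to show that, after passing to a suitable finite cover, the conjugation bringing $\Gamma$ into $I_\lambda$ can be taken compatibly with an integer affine frame and of the special block-triangular type of Assertion~\ref{A:AffHyper2}, so that both the hyperbolic linear part and the translation lattice become integral. Verifying that no integral $\operatorname{Sol}$-structure escapes Assertions~\ref{A:AffHyper1} and \ref{A:AffHyper2} is where the genuine work lies.
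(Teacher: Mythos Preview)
Your argument is correct and reaches the same conclusion via essentially the same external inputs (\cite{Aff3Cryst} for the solvable reduction and the conjugacy into $I_\lambda$; Theorem~\ref{T:IntAff3NilMan} for the nilpotent case; Assertions~\ref{A:AffHyper1}--\ref{A:AffHyper2} for the $\operatorname{Sol}$ case). The organization, however, differs from the paper's in one genuine respect worth noting.

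The paper establishes the dichotomy \emph{nilpotent versus hyperbolic} by passing to a finite cover so that the Zariski closure $\bar{\Gamma}$ is connected, invoking the Lie--Kolchin theorem to triangularize $L(\Gamma)$ over~$\mathbb{C}$, and then passing to a further cover so that all eigenvalues are real and positive; the Jordan form of each $L(g)$ is then forced to be either unipotent or $\operatorname{diag}(1,e^{s},e^{-s})$. You instead read the trichotomy directly off the algebraic type of $\pi_1(M)$ (virtually abelian / virtually nilpotent / solvable not virtually nilpotent) and appeal to the Thurston--Scott dictionary \cite{Thur01}, \cite{Skott86} to match these with $E^3$, $\operatorname{Nil}$, $\operatorname{Sol}$. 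Your route has the advantage of making the statement ``the geometric type is determined by $\pi_1$'' explicit rather than implicit, and it avoids the Lie--Kolchin step; the paper's route has the advantage of producing, at no extra cost, the concrete eigenvalue normal form for $L(\Gamma)$ that feeds directly into Assertions~\ref{A:AffHyper1} and~\ref{A:AffHyper2}. One small caution: as stated, your sentence ``every such $\Gamma$ is conjugate in $\operatorname{Aff}_3(\mathbb{R})$ to a discrete subgroup of one of the groups $I_\lambda$'' is only guaranteed by \cite{Aff3Cryst} \emph{after} a finite cover (the original $\Gamma$ may be a finite extension of such a subgroup); since you immediately pass to a finite-index $\mathbb{Z}^2\rtimes\mathbb{Z}$ anyway, this is harmless, but the order of the two steps should be swapped.
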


\begin{proof}[Proof of Theorem~\ref{T:IntAffSolv3d}] According to \cite{Aff3Cryst} any compact $3$-dimensional affine manifold is finitely covered by a solvmanifold. By taking a finite cover we can assume that the algebraic closure of $\bar{\Gamma}$ is connected, and therefore, by the Lie--Kolchin theorem, there exists a basis (over $\mathbb{C}$), in which all matrices from $L(\Gamma)$ are upper triangular. Moreover, up to a finite cover, we can assume that all eigenvalues are real and positive.

Thus, we can assume that $M^3 \approx \mathbb{R}^3/\Gamma$ are solvmanifolds, and that the linear parts of all elements  $L(g)$  are either unipotent or have Jordan normal form  $\left(\begin{smallmatrix} 1 & 0 & 0 \\ 0 & e^s & 0 \\ 0 & 0 & e^{-s} \end{smallmatrix} \right)$. If all $L(g)$ are unipotent, then $M^3$  is a nilmanifold, and this case is analyzed in Theorem~\ref{T:IntAff3NilMan}. Otherwise, according to \cite{Aff3Cryst}, the group $\Gamma$ is conjugate (in $\operatorname{Aff}_3(\mathbb{R})$) to a discrete subgroup of the group $I_{\lambda}$ (see Example~\ref{Ex:SolvAffGeneral}).  And, therefore, $\Gamma$  is conjugate in  $\operatorname{AGL}_3(\mathbb{Z})$ to one of the groups from Assertions~\ref{A:AffHyper1}  or \ref{A:AffHyper2}. Theorem~\ref{T:IntAffSolv3d} is proved.\end{proof}

\end{document}